\crefname{hyp}{hypothesis}{hypotheses}
\crefname{thm}{theorem}{theorems}
\crefname{lem}{lemma}{lemmas}
\crefname{cor}{corollary}{corollaries}
\crefname{prop}{proposition}{propositions}
\Crefname{theorem}{Theorem}{Theorems}
\newcommand{\R}{\mathbb{R}}
\newcommand{\1}{\mathds{1}}
\renewcommand{\epsilon}{\varepsilon}
\newcommand{\undu}{\underline{u}}
\newtheorem{thm}{Theorem}[section]
\newtheorem{prop}[thm]{Proposition}
\newtheorem{hyp}[thm]{Hypothesis}
\newtheorem{remark}[thm]{Remark}
\newtheorem{theorem}[thm]{Theorem}
\def\j{{\mathcal{J}}}
\def\D{{\mathcal{D}}}
\newcommand{\opd}[1]{\D[\,{#1}\,]}
\author{Emeric Bouin \footnote{CEREMADE - Universit\'e Paris-Dauphine, UMR CNRS 7534, Place du Mar\'echal de Lattre de Tassigny, 75775 Paris Cedex 16, France. E-mail: \texttt{bouin@ceremade.dauphine.fr}}
\and Jérôme Coville \footnote{UR 546 Biostatistique et Processus Spatiaux, INRAE, Domaine St Paul Site Agroparc, F-84000 Avignon, France. E-mail: \texttt{jerome.coville@inrae.fr}}
\and Guillaume Legendre \footnote{CEREMADE - Universit\'e Paris-Dauphine, UMR CNRS 7534, Place du Mar\'echal de Lattre de Tassigny, 75775 Paris Cedex 16, France. E-mail: \texttt{legendre@ceremade.dauphine.fr}}}
\begin{document}
\title{A simple flattening lower bound for solutions to some linear integrodifferential equations}
\maketitle

\begin{abstract}
Estimates on the asymptotic behaviour of solution to linear integro-differential equations are fundamental in understanding the dynamics occuring in many nonlocal evolution problems. They are usually derived by using precise decay estimates on the heat kernel of the considered diffusion process. In this note, we show that for some generic jump diffusion and particular initial data, one can derive a lower bound of the asymptotic behaviour of the solution using a simple PDE argument. This is viewed as an independant preliminary brick to study invasion phenomena in nonlinear reaction diffusion problems.
\end{abstract}

\noindent {\bf Keywords:} integro-differential operators, fractional Laplace operator, acceleration, spreading.

\section{Introduction}\label{sec:Intro}
In this note, we are interested in asymptotic behaviour of the solution to a generic nonlocal integro-differential equation of the form:
\begin{equation}\label{eq:main}
\frac{\partial u}{\partial t}(t,x) =\opd{u}(t,x),\quad t\in(0,+\infty),\ x\in\R,
\end{equation}
where $\opd{\cdot}$ is a diffusion operator with a kernel of convolution type $J$, that is
\[
\opd{u}(t,x):=\text{P.V.}\left(\int_{\R}[u(t,x-z)-u(t,x)]J(z)\,dy\right).
\]
This equation is complemented by an initial condition
\begin{equation}\label{eq:initial}
u(0,x)=u_0(x),\quad x\in\R,
\end{equation}
to form an evolution problem. We assume that the initial data $u_0$ belongs to $\mathscr{C}(\R,\R_+)\cap L^{\infty}(\R)$ and satisfies the following hypothesis.

\begin{hyp}\label{hyp:u0}
There exist $a>0$ and $b\in \R$ such that $ u_0 \ge a \mathds{1}_{(-\infty,b]}$.
\end{hyp}
Morever, we assume the kernel $J$ is a nonnegative function, satisfying the following properties.

\begin{hyp}\label{hyp:J}
Let $s$ be a positive real number. The kernel $J$ is symmetric and such that there exist positive constants $\j_0,\j_1$ and $R_0$, the latter being larger than $1$, such that 
\[
\int_{|z|\le 1}J(z)|z|^2\,dz\le 2\j_1 \quad \text{ and }\quad \frac{\j_0}{|z|^{1+2s}}\ge J(z)\mathds{1}_{|z|>1}(z)\ge \frac{\j_0^{-1}}{|z|^{1+2s}}\1_{\{|z|\ge R_0\}}.
\]
\end{hyp}

The operator $\opd{\cdot}$ can be seen as the infinitesimal generator of a generic symmetric jump process \cite{Bogdan2014}, and appears, for instance, in population dynamics where it describes the dispersion of individuals of a population modelled by the density $u$. Roughly speaking, the value $J(z)$ represents the probability of a jump of size $z$, which makes the tails of the convolution kernel of crucial importance in quantifying the dynamics. One may readily notice that the assumptions on $J$ allow to cover the two broad types of integro-differential operators usually considered in the literature, which are the fractional Laplace operator $(-\Delta)^s u$ and a standard convolution operator with integrable kernel, often written $J \star u - u$, respectively. 

The characterisation of the asymptotic behaviour of solutions to linear diffusion problems such as \eqref{eq:main}-\eqref{eq:initial} is a classical question, which can be answered for particular classes of Levy processes through the time and space scaling properties of their associated heat kernel, see for instance \cite{Polya1923,Blumenthal1960} for the fractional Laplace operator and \cite{Barlow2009,Bogdan2014,Cygan2017,Grzywny2019,Kaleta2019,Knopova2013,Kolokoltsov2000} in the case of more general Levy processes. Indeed, when such a heat kernel exists, the solution $u$ to \eqref{eq:main}-\eqref{eq:initial} is given by
\[
\forall t\in\in[0,+\infty),\ \forall x\in\mathbb{R},\qquad u(t,x)=\int_{\R}p(t,y)u_0(x-y)\,dy,
\]
where $p$ is the solution to
\begin{align*}
&\frac{\partial p}{\partial t}(t,x) =\opd{p}(t,x),\quad t\in(0,+\infty),\ x\in\R,\\
&p(0,x)=\delta_0(x),\quad x\in\R,
\end{align*}
where $\delta_0$ is the Dirac delta distribution. It then follows that the asymptotics of $u$ can be derived from the time and space scaling properties of $p$. For example, for the fractional Laplace operator $(-\Delta)^s$, it is well known (\cite{Blumenthal1960,Chen2003}) that the heat kernel $p_s$ satisfies, for some positive constant $C_1$, the following scalings
\[
\forall t\in(0,+\infty),\ \forall x\in\mathbb{R},\ \frac{{C_1}^{-1}}{t^{\frac{1}{2s}}[1+|t^{-\frac{1}{2s}} x|^{1+2s} ] }\leq p_s(t,x)\leq \frac{C_1}{t^{\frac{1}{2s}}[1+|t^{-\frac{1}{2s}} x|^{1+2s}]}.
\]
As a consequence, a solution to \eqref{eq:main}-\eqref{eq:initial} with a fractional Laplace operator satisfies
\begin{align*}
u(t,x)\geq\int_{\mathbb{R}} u_0(x-y)\frac{{C_1}^{-1}}{t^{\frac{1}{2s}}[1+|t^{-\frac{1}{2s}} y|^{1+2s} ] }\,dy &\geq\int_{x-b }^{+\infty} \frac{a\,{C_1}^{-1} }{t^{\frac{1}{2s}}[1+|t^{-\frac{1}{2s}} y|^{1+2s} ] }\,dy\\
&=\int_{t^{-\frac{1}{2s}}(x-b)}^{+\infty} \frac{a\,{C_1}^{-1} }{1+|z|^{1+2s} }\,dz\\
&=\int_{t^{-\frac{1}{2s}}(x-b)}^{+\infty}\left(\frac{a\,C_1^{-1}}{|z|^{1+2s}}-\frac{a\,{C_1}^{-1}}{(1+|z|^{1+2s})|z|^{1+2s} }\right)\,dz,
\end{align*}
which, in particular, provides the following estimate:
\[
\forall t\in(0,+\infty),\ \lim_{x\to+\infty} \frac{x^{2s}}{t}u(t,x)\ge \lim_{x\to+\infty} \frac{x^{2s}}{t}\left( \frac{t a\,{C_1}^{-1}}{(x-b)^{2s}} - \int_{t^{-\frac{1}{2s}}(x-b)}^{+\infty}\frac{a\,{C_1}^{-1} }{(1+|z|^{1+2s})|z|^{1+2s} }\right)=a\,{C_1}^{-1}.
\]

It is expected that an analogous estimate holds for solutions of \eqref{eq:main}-\eqref{eq:initial} when $\opd{\cdot}$ is a generic operator whose kernel satisfies \Cref{hyp:J}, and we shall here obtain such a flattening estimate directly from the problem, without any further restriction on the considered L\'evy measure other than those given in \Cref{hyp:J} or any knowledge of the associated heat kernel. Let us state precisely this result.

\begin{theorem}\label{th:main}
Assume that the kernel $J$ and the initial datum $u_0$ satisfy \Cref{hyp:J} and \ref{hyp:u0}, respectively. Then, there exists a constant $\kappa$, depending on $J$ and $u_0$, such that the solution $u$ to \eqref{eq:main}-\eqref{eq:initial} has the following asymptotic flattening behaviour at infinity:
\[
\forall t\in(0,+\infty),\ \lim_{x\to +\infty} x^{2s}u(t,x)\ge \kappa t.
\]
\end{theorem}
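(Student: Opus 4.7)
The plan is to combine two standard reductions with an ODE-type differential inequality. First, by linearity of the equation and the comparison principle for $\partial_t - \opd{\cdot}$ (available thanks to \Cref{hyp:J}), it suffices to handle the extremal initial datum $u_0 = a\,\mathds{1}_{(-\infty,b]}$; after a translation I may take $b=0$. For this canonical Heaviside profile, the symmetry of $J$ combined with $u_0(x)+u_0(-x)=a$ almost everywhere yields the conservation law $u(t,x)+u(t,-x)=a$, so that $u(t,0)=a/2$. Since $u_0$ is non-increasing and the symmetric convolutional semigroup associated with $\opd{\cdot}$ preserves monotonicity, $u(t,\cdot)$ is itself non-increasing on $\R$; in particular $u(t,y)\ge a/2$ for $y\le 0$ and $u(t,y)\le a/2$ for $y\ge 0$.

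Integrating the equation in time and using $u_0(x)=0$ for $x>0$, one gets
\[
u(t,x) = \int_0^t \opd{u}(\tau,x)\,\dd\tau,\qquad x>0,
\]
so the task reduces to a pointwise lower bound on $\opd{u}(\tau,x)$ for $x$ large. Writing the operator in its symmetric form
\[
\opd{u}(\tau,x) = \int_0^\infty \bigl[u(\tau,x-z)+u(\tau,x+z)-2u(\tau,x)\bigr]\,J(z)\,\dd z,
\]
I split the integral at $z=x$. On the outer part $\{z\ge x\}$ the point $x-z$ is non-positive, so $u(\tau,x-z)\ge a/2$; combined with $u(\tau,x+z)\ge 0$, this makes the integrand at least $a/2 - 2u(\tau,x)$. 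Together with the pointwise bound $\int_x^\infty J(z)\,\dd z \ge \j_0^{-1}/(2s\,x^{2s})$ valid for $x\ge R_0$ from \Cref{hyp:J}, the outer piece contributes at least $(a/2-2u(\tau,x))\,\j_0^{-1}/(2s\,x^{2s})$. If the inner part $\{0<z<x\}$ can be shown to contribute non-negatively, one obtains the differential inequality
\[
\partial_\tau u(\tau,x) \ge \bigl(a/2-2u(\tau,x)\bigr)\,C(x),\qquad C(x):=\frac{\j_0^{-1}}{2s\,x^{2s}},
\]
with initial value $u(0,x)=0$. Its explicit solution $u(t,x)\ge (a/4)\bigl(1-e^{-2C(x)t}\bigr)$ gives, as $x\to\infty$, the estimate $x^{2s}u(t,x)\ge at/(4s\,\j_0)+o(1)$, whence the desired conclusion with $\kappa = a/(4s\,\j_0)$.

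The main obstacle is the non-negativity of the inner contribution, which is tantamount to convexity of $u(\tau,\cdot)$ on $[0,+\infty)$, itself equivalent to unimodality of the heat kernel of $\opd{\cdot}$; such unimodality is immediate for the fractional Laplacian and available under Wolfe--Yamazato type conditions, but is not directly provided by \Cref{hyp:J}. Most of the technical effort would therefore go into either establishing this property in the present generality or, alternatively, designing an explicit subsolution bypassing convexity, for instance of the form $\underline{u}(t,x) = a\,\mathds{1}_{(-\infty,-R]}(x) + \kappa t\,\varphi(x)\,\mathds{1}_{x>-R}$ with $\varphi$ smooth, non-negative, convex on $\R$ and decaying like $x^{-2s}$ at infinity; the subsolution property then reduces to checking $\opd{\varphi}\ge 0$ (immediate from convexity of $\varphi$ via the symmetric form) together with a direct computation of $\opd{\mathds{1}_{(-\infty,-R]}}$ using the lower bound on $J$ in \Cref{hyp:J}.
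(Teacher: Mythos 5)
You have correctly isolated the driving mechanism --- the tail bound $\int_x^{+\infty}J(z)\,dz\ge \j_0^{-1}/(2s\,x^{2s})$ transports the plateau value $a/2$ from the left half-line at exactly the right rate --- and you have correctly identified the obstruction: the sign of the near-field part of $\opd{u}$, i.e.\ convexity of $u(\tau,\cdot)$ on $\R_+$, which amounts to unimodality information on the heat kernel and is not available under \Cref{hyp:J} alone. So your first route does not close, and the proof rests entirely on the fallback sketched in your last sentence. That fallback is the right idea (it is essentially what the paper does), but your specific ansatz $\underline{u}=a\,\mathds{1}_{(-\infty,-R]}+\kappa t\,\varphi\,\mathds{1}_{x>-R}$ fails on three counts. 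First, no non-trivial $\varphi$ can be non-negative, convex on all of $\R$, bounded, and decaying at infinity (a convex function bounded above on $\R$ is constant); and if you relax to convexity only where $\varphi$ decays and truncate by $\mathds{1}_{x>-R}$, the operator sees the truncated function, whose convexity breaks at $-R$, so $\opd{\varphi\,\mathds{1}_{x>-R}}\ge 0$ no longer follows from the symmetric form. Second, the factor $\kappa t$ grows without bound, so the strict ordering of sub- and supersolution on the left region, which is precisely what the half-line comparison principle (\Cref{ACP}) requires at every time in the interval, eventually fails. Third, the plateau height $a$ on $(-\infty,-R]$ cannot be dominated by $u$ there, since \Cref{prop:obs} only guarantees $u>a/2$.

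The paper's construction repairs all of this at once: it takes $w\equiv \tfrac12$ on $\R_-$ and $w(t,x)=\kappa t/(x^{2s}+2\kappa t)$ on $\R_+$ with $\kappa=\j_0^{-1}/(8s)$, so that $aw\le a/2<u$ on the left by \Cref{prop:obs} (your symmetry argument, made rigorous by mollifying the Heaviside datum to avoid evaluating a jump), and $w(t,\cdot)$ is convex on the only region where convexity is actually invoked, namely at the points $x+\tau\sigma z$ with $x\ge R_0+R$ and $|z|\le R$. The subsolution inequality $\partial_t w\le\opd{w}$ is verified only for $x\ge R_0+R_C$ and $t\in(0,t^*)$, by splitting $\opd{w}$ into the near field $|z|\le R$ (nonnegative by local convexity and symmetry of $J$), the far left field $z\le -x$ (which yields the gain $[\tfrac12-w]\int_x^{+\infty}J\ge \tfrac{\j_0^{-1}}{2s}[\tfrac12-w]x^{-2s}$ dominating $\partial_t w$), and the far right field (an error of order $R^{-2s}w$ absorbed by taking $R=R_C$ large in terms of $t^*$); the statement for all $t>0$ then follows because $C$, hence $t^*=2C/\kappa$, is arbitrary. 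If you redo your fallback with such a bounded, saturating, piecewise-defined profile and apply \Cref{ACP} on $[R_0+R_C,+\infty)$, your argument becomes the paper's proof; as written, there is a genuine gap.
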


One of the main applications of the above type of estimate arises naturally in the study of propagation phenomena described by some semi-linear equation of the form
\begin{equation}\label{eq:mono}
\frac{\partial u}{\partial t}(t,x) =\opd{u}(t,x) + f(u(t,x)),\quad t\in(0,+\infty),\ x\in\R,
\end{equation}
where $f$ is a nonlinearity describing the local dynamics of the modelled system. On way to capture both the evolution and speed of transition in the resulting problem is by means of the construction of super- and sub-solution that mimic the essential features (reaction and dispersal) of the system. The natural time and space scalings of the equation play an important role in these constructions. When the nonlinearity $f$ is nonnegative, the solution to \eqref{eq:main}-\eqref{eq:initial} is a trivial sub-solution to the problem, giving rise to a first lower bound on the decay of the solution. In order to achieve a more detailed description of the dynamics, a more sophisticated sub-solution needs to be constructed and the estimate obtained in \Cref{th:main} simplifies such a construction by first deriving the natural time and space scalings of the solution of the semi-linear equation and next allowing to compare this solution with a sub-solution via the use of a parabolic comparison principle.

For instance, when the non-linearity is monostable with, for instance, $0$ and $1$ as equilibria, i.e., $f(0)=f(1)=0$, one can try to understand the dynamics of the solution to \eqref{eq:mono}-\eqref{eq:initial} by considering a level set of height $\lambda$ in $(0,1)$ and a sub-solution $\undu$ such that $\undu\le \lambda$ and satisfying the following decay at infinity:
\begin{equation}\label{eq:mono-subsol}
\forall\theta\in(0,+\infty),\ \forall t\in(0,+\infty),\ \lim_{x\to+\infty}\frac{x^{2s}}{\theta t}\undu(t,x)<+\infty. 
\end{equation}
If such a sub-solution exists, then, for a positive real number $\theta_0$ and a positive time $t_0$, there exists a positive constant $C_0$ such that 
\[
\lim_{x\to+\infty}\frac{x^{2s}}{\theta_0 t_0}\undu(t_0,x)\le C_0.
\]
From \Cref{th:main}, one also has
\[
\forall t\in(0,+\infty),\ \lim_{x\to+\infty} \frac{x^{2s}}{t}u(t,x)\ge \kappa\,a,
\]
so that there exists a positive real number $t'$ such that $\kappa a t'>2\theta_0 t_0C_0$. It follows that $\undu(t_0,x)\le u(t',x)$ for $x$ large enough, say $x>x_0>0$ and, using \Cref{th:main}, one has $\undu(t_0,x)\le u(t,x)$ for $t\ge t'$ and $x>x_0>0$. Combining this with the invasion property usually satisfied by the solution $u$ in such context, i.e. a property asserting that $u(t,x)$ tends to $1$ as $t$ tends to infinity and uniformly in $x$ in $(-\infty,A]$ for any real number $A$, we may find a time $t''>t'$ such that $u(t'',x)\ge \undu (t_0,x)$ for $x\in\R$. Due to a parabolic comparison principle, it follows that $u(t''+t-t_0,x)\ge \undu(t,x)$ for $t\ge t_0$ and $x\in\R$, implying that the level set of height $\lambda$ of the solution travels at a speed at least equal to that of the sub-solution. 

\medskip

The present note is organised as follows. First, some comparison principles are recalled and a useful \textit{a priori} bound is derived. The argument needed to prove \Cref{th:main} is next developed. 

\section{Preliminaries}
Let us start by recalling the different comparison principles that we will use throughout this note. 

\begin{theorem}[standard comparison principle]\label{SCP}
Assume that the kernel $J$ satisfies \Cref{hyp:J} and let $u$ and $v$ be two functions in $\mathscr{C}^1(\R_+^*,\mathscr{C}(\R))\cap\mathscr{C}(\R_+,\mathscr{C}(\R))$, satisfying, for some positive real number $T$,
\begin{align*}
&\frac{\partial u}{\partial t}(t,x)\ge \opd{u}(t,x),&\quad t\in (0,T),\ x\in \R,\\
&\frac{\partial v}{\partial t}(t,x)\le \opd{v}(t,x),&\quad t\in (0,T),\ x\in \R,\\
&u(0,x)\ge v(0,x),&\quad x\in \R.
\end{align*} 
Then, one has $u(t,x)\ge v(t,x)$ for $t\in (0,T)$, $x\in \R$. 
\end{theorem}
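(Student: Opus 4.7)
The plan is a contradiction argument relying on the classical trick of perturbing $u-v$ by a strict supersolution. Setting $w:=u-v$, the linearity of $\opd{\cdot}$ reduces the statement to proving: if $w\in\mathscr{C}^1((0,T),\mathscr{C}(\R))\cap\mathscr{C}([0,T],\mathscr{C}(\R))$ satisfies $\partial_t w\ge \opd{w}$ on $(0,T)\times\R$ and $w(0,\cdot)\ge 0$ on $\R$, then $w\ge 0$ on $[0,T]\times\R$.

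The key observation is that $\opd{\cdot}$ annihilates constants, since its integrand is the difference $w(x-z)-w(x)$, so adding an $x$-independent quantity to $w$ leaves $\opd{w}$ unchanged. Hence, for any $\epsilon>0$ and any $K>0$, the perturbation $w_\epsilon(t,x):=w(t,x)+\epsilon\,e^{Kt}$ satisfies the \emph{strict} inequality
\[
\partial_t w_\epsilon(t,x)-\opd{w_\epsilon}(t,x)\ \ge\ K\epsilon\,e^{Kt}\ >\ 0 \qquad\text{on }(0,T)\times\R,
\]
together with $w_\epsilon(0,\cdot)\ge\epsilon>0$. Once this strict version is in hand, a standard minimum-point argument concludes: should $w_\epsilon$ become nonpositive somewhere in $(0,T]\times\R$, one would pick the first time $t_0\in(0,T]$ at which $\inf_{x\in\R}w_\epsilon(t_0,x)=0$ and a point $x_0\in\R$ realising this infimum. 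Then $\partial_t w_\epsilon(t_0,x_0)\le 0$, while the principal-value integrand $w_\epsilon(t_0,x_0-z)-w_\epsilon(t_0,x_0)=w_\epsilon(t_0,x_0-z)\ge 0$ forces $\opd{w_\epsilon}(t_0,x_0)\ge 0$, contradicting the strict inequality above. Sending $\epsilon\to 0^+$ then yields $w\ge 0$.

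The main obstacle is precisely the existence of a minimising point $x_0\in\R$ at time $t_0$: the spatial domain is unbounded and the hypotheses do not include any decay of $w$ at infinity, so a minimising sequence could escape to $\pm\infty$. I would handle this via a secondary perturbation $\eta\,\varphi(x)$, where $\varphi\ge 1$ grows slowly at infinity ($\varphi(x)\to+\infty$ as $|x|\to\infty$) and is chosen so that $\opd{\varphi}$ is uniformly bounded on $\R$; under \Cref{hyp:J}, a function such as $\varphi(x)=(1+x^2)^{\alpha/2}$ with $\alpha\in(0,2s)$ is a natural candidate. Taking $K$ large enough absorbs the bounded extra term $\eta\,\opd{\varphi}$ into the strict bound, and the growth of $\varphi$ forces the infimum of $w_\epsilon+\eta\,\varphi$ to be attained at an interior $x_0\in\R$, to which the above contradiction applies. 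Letting successively $\eta\to 0^+$ and $\epsilon\to 0^+$ closes the argument.
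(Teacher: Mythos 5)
The paper does not actually supply a proof of this statement---it declares \Cref{SCP} standard and defers to the cited references---so there is no in-paper argument to compare yours against. Your proposal is the standard proof and is essentially correct: the reduction to $w=u-v$ via linearity, the strict perturbation $\epsilon e^{Kt}$ (which works because $\opd{\cdot}$ annihilates functions constant in $x$), the first-touching-time argument giving $\partial_t w_\epsilon(t_0,x_0)\le 0$ against $\opd{w_\epsilon}(t_0,x_0)\ge 0$, and the coercive barrier $\eta\varphi$ to force attainment of the infimum are exactly the ingredients one expects. Three points deserve explicit care if you write this out in full. First, the barrier step needs $w$ to be bounded (at least locally uniformly in time); this is implicit in reading $\mathscr{C}(\R)$ as bounded continuous functions with the sup norm, and without some such boundedness or growth restriction the comparison principle is simply false, so you should state that you are using it. Second, the claim that $\opd{\varphi}$ is uniformly bounded for $\varphi(x)=(1+x^2)^{\alpha/2}$, $\alpha\in(0,2s)$, is true but is the one genuinely quantitative step: the near-field part is controlled by $\|\varphi''\|_{L^\infty}\int_{|z|\le 1}|z|^2J(z)\,dz\le 2\j_1\|\varphi''\|_{L^\infty}$, and the far-field part, after symmetrising, is controlled by splitting $|z|\lessgtr|x|/2$ and using the tail bound $J(z)\le \j_0|z|^{-1-2s}$, yielding a bound of order $|x|^{\alpha-2s}$; this is where $\alpha<2s$ is used and it should not be left as ``a natural candidate.'' Third, you should also verify continuity in $t$ of $m(t)=\inf_x\bigl(w_\epsilon+\eta\varphi\bigr)(t,x)$ (immediate from continuity of $t\mapsto w(t,\cdot)$ in sup norm plus coercivity of $\varphi$) so that the first touching time $t_0>0$ exists and the infimum at $t_0$ equals zero. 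With these details filled in, the argument is complete.
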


The second comparison principle works with a subset in space.
\begin{theorem}[adapted comparison principle]\label{ACP}
Assume that the kernel $J$ satisfies \Cref{hyp:J} and let $u$ and $v$ be two functions in $\mathscr{C}^1(\R_+^*,\mathscr{C}(\R))\cap \mathscr{C}(\R_+^*,\mathscr{C}(\R))$, satisfying, for some $0\le t_0<t_1$ and $R_0\in \R$,
\begin{align*}
&\frac{\partial u}{\partial t}(t,x)\ge \opd{u}(t,x),&\quad t\in (t_0,t_1),\ x\in [R,+\infty),\\
&\frac{\partial v}{\partial t}(t,x)\le \opd{v}(t,x),&\quad t\in (t_0,t_1),\ x\in [R,+\infty),\\
&u(t,x)> v(t,x),&\quad t\in (t_0,t_1),\ x\in (-\infty,R],\\
&u(t_0,x)\ge v(t_0,x),&\quad x\in \R,.
\end{align*} 
Then, one has $u(t,x)\ge v(t,x)$ for $t\in(t_0,t_1)$, $x\in \R$.
\end{theorem}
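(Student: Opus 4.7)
The plan is to prove the flattening bound through three main steps: (i) a reduction to the case of a Heaviside initial datum $a\mathbf{1}_{(-\infty,0]}$ via Theorem~\ref{SCP}; (ii) a symmetry and monotonicity argument producing a uniform-in-time lower bound on $(-\infty,0]$; and (iii) the construction of an explicit sub-solution with the desired algebraic tail, to which Theorem~\ref{ACP} will be applied.

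First, by Hypothesis~\ref{hyp:u0}, Theorem~\ref{SCP}, and translation invariance of the operator, it suffices to show the bound for the solution $v$ with initial datum $v_0 = a\mathbf{1}_{(-\infty,0]}$. Applying Theorem~\ref{SCP} to shifted data $v_0(\cdot - y)\leq v_0$ (for $y\leq 0$) yields that $v(t,\cdot)$ is non-increasing for every $t$. The symmetry $J(-z)=J(z)$ next implies that $(t,x)\mapsto a - v(t,-x)$ solves the same equation with an initial datum coinciding with $v_0$ outside a null set, so that by uniqueness $v(t,x) + v(t,-x) = a$. This gives $v(t,0) = a/2$ and, by monotonicity, $v(t,x) \geq a/2$ for all $x\leq 0$ and $t \geq 0$.

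For the sub-solution I propose the truncated ansatz
\[
\underline{u}(t,x) = \min\!\bigl(c_0,\, \kappa\,t\,\phi(x)\bigr),
\]
with $c_0 \in (0,a/2)$ (say $c_0 = a/4$) and $\phi$ a smooth non-increasing function with $\phi\equiv 1$ on $(-\infty,1]$ and $\phi(x) = x^{-2s}$ for $x\geq 2$. The key auxiliary estimate is the pointwise lower bound $\mathcal{D}[\phi](x) \geq c_J\, x^{-2s}$ for $x$ large, obtained by writing $\mathcal{D}[\phi]$ in its symmetric form, using the convexity of $y\mapsto y^{-2s}$ on $(1,+\infty)$ to get a non-negative contribution from $z\in(0,x-1)$, and extracting the dominant positive contribution from $z\geq x-1$ (where $\phi(x-z)\geq 1$ dominates $2\phi(x) = 2x^{-2s}\to 0$) via the kernel lower bound $J(z)\geq \j_0^{-1}|z|^{-1-2s}$ of Hypothesis~\ref{hyp:J}.

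The main obstacle is to check that $\underline{u}$ is indeed a sub-solution in a suitable sense that accommodates the kink induced by the truncation: in the region $\{\kappa t\phi(x) < c_0\}$, the sub-solution inequality reduces to $\kappa\phi(x) \leq \mathcal{D}[\underline{u}](t,x)$, and the saving is that for $x$ large the bulk of the left half-line falls in the plateau $\{\underline{u} = c_0\}$, whose contribution to $\mathcal{D}[\underline{u}](x)$ through jumps $z\gtrsim x$ is proportional to $c_0$ rather than to $\kappa t\phi(x-z)$, giving $\mathcal{D}[\underline{u}](x) \geq (c_0\j_0^{-1}/(4s))\,x^{-2s}$; choosing $\kappa \leq c_0\j_0^{-1}/(4s)$ then secures the inequality. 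Constants being trivially sub-solutions, the minimum $\underline{u}$ remains a sub-solution across the truncation (after a standard smoothing argument if necessary). Applying Theorem~\ref{ACP} with $R = 0$ on $[0,t_1]$ with arbitrary $t_1 > 0$—using $\underline{u}(0,\cdot)\equiv 0 \leq v_0$ together with the strict inequality $\underline{u}(t,x)\leq c_0 < a/2 \leq v(t,x)$ on $(-\infty,0]$—then yields $v\geq\underline{u}$ globally, and letting $x\to+\infty$ in the tail region where $\underline{u}(t,x) = \kappa t/x^{2s}$ gives the announced bound with $\kappa$ depending on $J$ and $u_0$ through $\j_0$, $s$, and $a$.
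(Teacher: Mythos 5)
Your proposal does not prove the statement it was assigned. The statement to be proved is \Cref{ACP}, the adapted comparison principle itself; what you have written is instead an outline of a proof of the flattening bound of \Cref{th:main}, in the course of which \Cref{ACP} is invoked as a black box (``Applying Theorem~\ref{ACP} with $R=0$\dots''). Read as a proof of \Cref{ACP}, the argument is therefore circular: it assumes precisely the conclusion it is meant to establish, and none of its steps (reduction to Heaviside data, the symmetry identity $v(t,x)+v(t,-x)=a$, the truncated sub-solution ansatz) addresses the actual question, namely why an ordering at time $t_0$ on all of $\R$ together with a strict ordering on $(-\infty,R]$ for later times forces the ordering to persist on $[R,+\infty)$. (The paper itself omits this proof, referring to \cite{Brasseur2021,Zhang2021}, so there is no internal proof to compare against; but the task was to supply one.)

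For completeness, the expected argument is the standard first-touching-point scheme for nonlocal operators: set $w=u-v$ and $w_\epsilon(t,x)=w(t,x)+\epsilon(1+t-t_0)$ for $\epsilon>0$, so that $w_\epsilon(t_0,\cdot)\ge\epsilon>0$ and $w_\epsilon>0$ on $(-\infty,R]$. If $w_\epsilon$ ever reaches $0$, there is a first time $t^*\in(t_0,t_1)$ and a point $x^*$ at which $w_\epsilon(t^*,x^*)=0$ is a global spatial minimum; by the strict positivity on $(-\infty,R]$ one has $x^*\ge R$, hence the differential inequalities apply there, giving $\partial_t w_\epsilon(t^*,x^*)\ge \opd{w_\epsilon}(t^*,x^*)+\epsilon\ge\epsilon>0$ (the nonlocal term is nonnegative at a global minimum because $w_\epsilon(t^*,x^*+z)-w_\epsilon(t^*,x^*)\ge0$ for all $z$), contradicting $\partial_t w_\epsilon(t^*,x^*)\le0$ at a first touching time; letting $\epsilon\to0$ concludes. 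The genuinely delicate point, which any complete proof must handle and your text does not mention, is the attainment of the infimum on the unbounded set $[R,+\infty)$: one needs the boundedness (or controlled behaviour at $x\to+\infty$) of $u$ and $v$ and a compactness or penalisation device to produce the touching point $(t^*,x^*)$. As a separate remark, your sketch of \Cref{th:main} is essentially the paper's own strategy (the half-line lower bound of \Cref{prop:obs} followed by an algebraically decaying sub-solution fed into \Cref{ACP}), so it is not wasted work --- it is simply the proof of a different theorem.
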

The proofs of \Cref{SCP} and \Cref{ACP} are rather standard and will be omitted here, but the reader can refer to \cite{Brasseur2021,Zhang2021} for some ideas.

Some \textit{a priori} estimates on the solution to \eqref{eq:main}-\eqref{eq:initial} are also needed.

\begin{prop}\label{prop:obs}
Assume that the kernel $J$ and the initial datum $u_0$ satisfy \Cref{hyp:J} and \ref{hyp:u0}, respectively. Let $u$ be a positive solution to \eqref{eq:main}-\eqref{eq:initial}. Then, one has
\[
\forall t\in[0,+\infty),\ \forall x\in(-\infty,b),\ u(t,x)> \frac{a}{2}.
\] 
\end{prop}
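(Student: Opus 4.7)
The plan is to sandwich $u$ below by a reflection-symmetric subsolution whose value on the axis $x=b$ can be computed exactly, and then to upgrade the resulting nonstrict bound using the strictly positive remainder of the initial datum.

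By linearity of \eqref{eq:main} and \Cref{hyp:u0}, I would split $u_0 = a\mathds{1}_{(-\infty,b]}+r_0$ with $r_0:=u_0-a\mathds{1}_{(-\infty,b]}\ge 0$, and accordingly write $u=v+r$, where $v$ and $r$ solve \eqref{eq:main} from these respective data (the discontinuous subsolution $v$ being understood in the usual way, as a pointwise limit of solutions arising from continuous truncations below $a\mathds{1}_{(-\infty,b]}$). \Cref{SCP} gives $u\ge v$. I would then use symmetry of $J$ to fix the value of $v$ at $x=b$: the function $\tilde v(t,x):=a-v(t,2b-x)$ solves \eqref{eq:main} (since $\opd{\cdot}$ annihilates constants and commutes with reflection about $b$, as $J$ is even) with initial datum coinciding with $v(0,\cdot)$ almost everywhere, so by uniqueness $\tilde v\equiv v$, which yields
\[
v(t,x)+v(t,2b-x)=a, \qquad \text{hence } v(t,b)=\tfrac{a}{2}.
\]
Translation invariance together with \Cref{SCP} further shows that $v$ is nonincreasing in $x$ (for $h>0$, $v(t,x-h)$ solves \eqref{eq:main} with datum $a\mathds{1}_{(-\infty,b+h]}\ge v(0,\cdot)$), and combined with the previous step this gives $v(t,x)\ge v(t,b)=a/2$ for every $t\ge 0$ and $x\le b$.

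To promote the nonstrict inequality to a strict one, I would use the remainder $r$. Since $u_0$ is continuous with $u_0(b)\ge a>0$, the function $r_0$ cannot be identically zero (otherwise $u_0$ would have to jump by $-a$ at $b$), so $r_0\ge 0$ with $r_0\not\equiv 0$. A strong maximum principle for the nonlocal equation with kernel $J$ then yields $r(t,x)>0$ for every $t>0$ and $x\in\R$, and hence $u=v+r>a/2$ on $(-\infty,b]\times(0,+\infty)$; the case $t=0$ is immediate from \Cref{hyp:u0}. I expect the main obstacle to be establishing this strong maximum principle in a PDE framework, without invoking the heat kernel (in keeping with the paper's stated constraint). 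A natural approach is to exploit the long-tail lower bound $J(z)\ge \j_0^{-1}|z|^{-1-2s}$ on $\{|z|\ge R_0\}$ supplied by \Cref{hyp:J}, which forces any initial positive mass of $r_0$ to spread outward by $R_0$-sized jumps, eventually covering the whole line at any positive time.
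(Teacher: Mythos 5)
Your core mechanism --- the reflection symmetry of the kernel forcing the solution issued from a profile symmetric about $x=b$ to take the value $a/2$ on that axis, combined with monotonicity in $x$ obtained from translation invariance and \Cref{SCP} --- is exactly the paper's. The two arguments diverge in the technical scaffolding, and that is where your version has a genuine gap. By insisting on the sharp indicator $a\mathds{1}_{(-\infty,b]}$ you only reach the nonstrict bound $v\ge a/2$ to the left of $b$, and you then need a strong maximum principle for the remainder $r$ to conclude; you correctly flag this as the main obstacle, but it is left unproven, and it is not routine here: \Cref{hyp:J} provides a lower bound on $J$ only for $|z|\ge R_0$, so positivity of $r$ does not propagate to nearby points in one step, and one must iterate the jump structure (two jumps of size at least $R_0$ do cover $\R$ starting from any interval, but this requires an explicit Duhamel-type iteration that your sketch does not supply). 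A second, smaller issue is that the limiting procedure defining $v$ from the discontinuous datum runs into exactly the difficulty the paper's first remark warns about: for non-regularising semigroups (e.g.\ $J\star u-u$ with $J$ integrable) the jump at $b$ persists for all $t>0$, the identity $v(t,x)+v(t,2b-x)=a$ holds only away from $x=b$, and the asserted equality $v(t,b)=a/2$ is not actually available; your argument survives only because the final claim concerns $x<b$, but the step as written is not justified.

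The paper avoids both difficulties at once by mollifying: it replaces $a\mathds{1}_{(-\infty,b]}$ by $v_0=a\rho\star\mathds{1}_{(-\infty,b]}$ with $\rho$ a smooth even mollifier (continuity of $u_0$ together with \Cref{hyp:u0} guarantees $u_0\ge v_0$ after rescaling $\rho$), so that $v_0(\cdot+b)+v_0(b-\cdot)\equiv a$ holds exactly, uniqueness of the mild solution gives $\bar v\equiv a$ and hence $v(t,b)=a/2$ cleanly, and the strict inequality for $x<b$ then follows from the strict monotonicity of $v(t,\cdot)$ rather than from a strong maximum principle applied to a remainder term. If you wish to keep your decomposition $u=v+r$, the honest fix is either to prove the iterated strong maximum principle sketched above or, more simply, to adopt the mollified subsolution and dispense with $r$ altogether.
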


\begin{proof}
Let $\rho$ be a smooth symmetric mollifier of unit mass, i.e., a nonnegative even function $\rho$ in $\mathscr{C}_c^{\infty}(\R)$ such that $\int_{\R}\rho(z)\,dz=1$, and consider the solution $v$ to the problem: 
\begin{align}
&\frac{\partial v}{\partial t}(t,x)=\opd{v}(t,x),\quad t\in(0,+\infty),\ x\in\R,\label{eq:lin}\\
&v(0,x)=v_0(x):=a\rho\star\mathds{1}_{(-\infty,b]}(x),\ x\in\R.\label{eq:lin-CI}
\end{align} 
Since by construction $u_0\ge a \mathds{1}_{(-\infty,b]}$ and is continuous, we may assume that, up to a rescaling of $\rho$, $u_0\ge v_0$. Therefore, by the comparison principle in \Cref{SCP}, we have $u(t,x)\ge v(t,x)$ for $t\ge 0$ and $x \in \R$. Moreover since $v_0$ is monotone non increasing and equation \eqref{eq:lin} is invariant under translation in space, we deduce that, for $t>0$, the function $v(t,\cdot)$ is monotone decreasing.\\
Let us now observe that the function $\bar v(t,x):=v(t,x+b)+v(t,b-x)$ satisfies equation \eqref{eq:lin}, with the initial condition $\bar v(0,\cdot)=v_0(x+b)+v_0(b-x)$. By a straightforward change of variables, we have
\begin{align*}
\forall x\in\R,\ v_0(x+b)+v_0(b-x)&=a\left(\int_{-\infty}^{0}\rho(x-y)\,dy + \int_{-\infty}^0\rho(-x-y)\,dy\right)\\
&= a\left(\int_{-\infty}^{0}\rho(x-y)\,dy + \int_{-\infty}^0\rho(x+y)\,dy\right)\\
&=a.
\end{align*}
By the uniqueness of the mild solution of the initial value problem (see for instance Theorem 4.3 in \cite{Pazy1983}), this implies that $\bar v\equiv a$. As a consequence, we deduce that $2\,v(t,b)=a$ for $t>0$ and, since $v(t,\cdot)$ is decreasing, we have $v(t,x)>\frac{a}{2}$ for $t>0$ and $x<b$, thus ending the proof.
\end{proof}

\begin{remark}
When the semigroup generated by $\opd{\cdot}$ is regularising, i.e. the solution $u$ belongs to $\mathscr{C}^1(\R_+^*,\mathscr{C}(\R_+))$ whenever $u_0$ is in $L^{\infty}(\R)$, the above argument holds for initial date that are step functions, e.g. $v(0,\cdot)=a\mathds{1}_{(-\infty,b]}$. This is not necessarily the case for non-regularising semigroups such as those related to a convolution operator with a continuous integrable kernel. In such situations, the regularity of $v$ is the same as of $u_0$ and $v$ has a jump discontinuity at the point $b$, preventing an evaluation $\bar v$ at this point.
\end{remark} 

\begin{remark}
Note that the proof of the above estimate relies solely on an elementary use of the comparison principle for the evolution problem. It is valid, in full generality, as soon as the considered semigroup possesses some basic properties, such as mapping a continuous function to another one, satisfying a comparison principle and having a translation invariant infinitesimal generator. In particular, it holds true for semigroups generated by operators satisfying the Bony--Courr\`ege--Priouret maximum principle \cite{Bony1968}, characterised by an elliptic part and a L\'evy-type part, the latter being associated with a symmetric L\'evy measure, that is a nonnegative, nonzero measure $\nu$ on $\R$, satisfying $\nu(\{0\})=0$, $\nu(-A)=\nu(A)$ for every Borel set in $\R$, and $\int_{\R}\min\{1,z^2\}\nu(dz)<+\infty$. 
\end{remark}

\section{Proof of \Cref{th:main}}\label{sec:proof}
 Having this preliminary estimate at hand, we can now prove \Cref{th:main}. Our strategy is to construct an adequate subsolution. Let $w$ be the parametric function defined by
\[
w(t,x)=
\begin{cases}
\frac{1}{2} &  t\in(0,+\infty),\ x\in(-\infty,0],\\
\frac{\kappa t}{x^{2s}+2 \kappa t} & t\in(0,+\infty),\ x\in(0,+\infty),
\end{cases}
\]
with $\kappa=\frac{{\j_0}^{-1}}{8s}$.

Let us estimate $\opd{w}$. Let $R>1$ to be chosen later. Since $J$ satisfies \Cref{hyp:J}, we have by a direct computation, for $t\in(0,+\infty)$ and $x\in[R_0+R,+\infty)$,
\begin{align*}
\opd{w}(t,x)&=\int_{-\infty}^{-R} [w(t,x+z)-w(t,x)]J(z)\,dz +\int_{-R}^{R}[w(t,x+z)-w(t,x)]J(z)\,dz \\
&\qquad \qquad +\int_{R}^{+\infty}[w(t,x+z)-w(t,x)]J(z)\,dz\\
&\ge \int_{-\infty}^{-R} [w(t,x+z)-w(t,x)]J(z)\,dz +\int_{-R}^{R}[w(t,x+z)-w(t,x)]J(z)\,dz -w(t,x)\int_{R}^{+\infty}J(z)\,dz\\
&=\int_{-R}^{R} [w(t,x+z)-w(t,x)]J(z)\,dz +\int_{-\infty}^{-x} [w(t,x+z)-w(t,x)]J(z)\,dz\\
& \qquad \qquad+\int_{-x}^{-R}[w(t,x+z)-w(t,x)]J(z)\,dz -w(t,x)\int_{R}^{+\infty}J(z)\,dz\\
&\ge \int_{-R}^{R} [w(t,x+z)-w(t,x)]J(z)\,dz +\left[\frac{1}{2}-w(t,x)\right] \int_{x}^{+\infty}J(z)\,dz -w(t,x)\int_{R}^{+\infty}J(z)\,dz\\
&\ge \int_{-R}^{R} [w(t,x+z)-w(t,x)]J(z)\,dz +\frac{{\j_0}^{-1}}{2s}\left[\frac{1}{2}-w(t,x)\right] \frac{1}{ x^{2s}} - \frac{\j_0}{2sR^{2s}} w(t,x).
\end{align*}
 
The remaining integral can be estimated using the regularity and the convexity with respect to space of $w$, together with the symmetry of $J$. Indeed, since $w(t,\cdot)$ belongs to $\mathscr{C}^{1}(\R_+)$ for $t>0$, we have, for $x\ge R_0+R$,
\[
\forall t\in(0,+\infty),\ \forall x\in[R_0+R,+\infty),\ w(t,x+z)-w(t,x)=z\int_{0}^1\partial_x w(t,x+\tau z)\,d\tau,
\]
and thus
\[
\int_{-R}^{R} [w(t,x+z)-w(t,x)]J(z)\,dz=\int_{-R}^{R}\int_{0}^1 \partial_xw(t,x+\tau z)J(z)z\,dz.
\]
The kernel $J$ being symmetric, we have
\[
\int_{-R}^{R} [w(t,x+z)-w(t,x)]J(z)\,dz=\int_{-R}^{R}\int_{0}^1 [\partial_xw(t,x+\tau z)-\partial_xw(t,x)]J(z)z\,d\tau dz,
\]
which can be rewritten as 
\[
\int_{-R}^{R} [w(t,x+z)-w(t,x)]J(z)\,dz=\int_{-R}^{R}\int_{0}^1\int_{0}^1 \partial_{xx}w(t,x+\tau \sigma z)J(z)\tau z^2\,d\sigma d\tau dz,
\]
using that $w(t,\cdot)$ belongs to $\mathscr{C}^2(\R_+)$ for $t>0$. Since $w$ is convex with respect to space in $\R_+$, the latter integral is positive and we get 
\[
\opd{w}(t,x)\ge \frac{{\j_0}^{-1}}{2s}\left[\frac{1}{2}-w(t,x)\right] \frac{1}{ x^{2s}}-\frac{\j_0}{2sR^{2s}} w(t,x),\quad t\in(0,+\infty),\ x\in[R_0+R,+\infty).
\]
Altogether, we then have for $t>0$ and $x\ge R_0+R$,
\[
\frac{\partial w}{\partial t}(t,x) -\opd{w}(t,x)\le \frac{\kappa x^{2s}}{(x^{2s}+2\kappa t)^2} -\frac{{\j_0}^{-1}}{2s}\left[\frac{1}{2}-w(t,x)\right] \frac{1}{ x^{2s}} +\frac{\j_0}{2s R^{2s}} w(t,x),
\]
which, by using the definition of $w$, yields 
\begin{align*}
\frac{\partial w}{\partial t}(t,x) -\opd{w}(t,x)&\le \frac{\kappa x^{2s}}{(x^{2s}+2\kappa t)^2} -\frac{{\j_0}^{-1}}{2s}\left[\frac{1}{2}-\frac{\kappa t}{x^{2s} +2\kappa t}\right] \frac{1}{ x^{2s}} +\frac{\j_0}{2s R^{2s}} \frac{\kappa t}{x^{2s}+2\kappa t}\\
&\le \frac{\kappa x^{2s}}{(x^{2s}+2\kappa t)^2} -\frac{{\j_0}^{-1}}{4s}\frac{1}{x^{2s} +2\kappa t} +\frac{\j_0}{2s R^{2s}} \frac{\kappa t}{x^{2s}+2\kappa t}\\
&\le \frac{1}{(x^{2s}+2\kappa t)}\left(\kappa  -\frac{{\j_0}^{-1}}{4s} +\frac{\j_0 \kappa t}{2s R^{2s}}\right).
\end{align*}
Since $\kappa=\frac{{\j_0}^{-1}}{8s}$, we end up with
\begin{align*}
\frac{\partial w}{\partial t}(t,x) -\opd{w}(t,x)&\le \frac{1}{x^{2s}+2\kappa t} \left( -\frac{{\j_0}^{-1}}{8s} + \frac{\j_0\kappa t}{2s R^{2s}}\right).
\end{align*}
For any $C>0$, let us define $t^*:= \frac{2C}{\kappa}$ and choose $R$ large enough says $R\ge R_C:= \left(8C{\j_0}^2\right)^{\frac{1}{2s}}$. From the above computations, it then follows that
\begin{equation}\label{bcl2:eq-subsol-flatnonlin}
\frac{\partial w}{\partial t}(t,x) -\opd{w}(t,x)\le 0, \quad t \in (0,t^*),\ x\in[R_0+R_C,+\infty).
\end{equation}

Equipped with this subsolution, let us now conclude. Using \Cref{prop:obs}, there exists $a\in\R_+^*$ and $b\in \R$ such that $u(t,x)>\frac{a}{2}$ for $t>0$ and $x<b$. By definition of $w$, we thus have $u(t,x-R_0-R_C-b)>\frac{a}{2}\ge aw(t,x)$ for $t>0$ and $x\le R_0+R_C$. Therefore, setting $\tilde u(t,x):=u(t,x-R_0-R_C-b)$, we have 
\begin{align*}
&\frac{\partial\tilde u}{\partial t}(t,x) -\opd{\tilde u}(t,x)= 0,\quad t \in (0,t^*),\ x\in \R,\\
& \tilde{u}(t,x) \ge aw(t,x),\quad t \in [0,t^*],\ x\le R_0+R_C,\\
& \tilde{u}(0,x) > aw(0,x),\quad x\in \R.
\end{align*}

Using \Cref{ACP}, it follows that for $t\in(0,t^*)$ and $x\in\R$, one has $\tilde u(t,x)\ge aw(t,x)$ and therefore, by the definition of $t^*$,
\[
u\left(\frac{t^*}{2},x\right)\ge \frac{aC}{(x+R_0+R_C+b)^{2s}+2 C}.
\]
It follows that 
\[
\lim_{x\to +\infty} x^{2s}u\left(\frac{t^*}{2},x\right)\ge \lim_{x\to +\infty} \frac{Cax^{2s}}{(x+R_0+R_C+b)^{2s}+2 C}=Ca,
\]
or, equivalently, that, for all positive real number $C$,
\[
\lim_{x\to +\infty} x^{2s}u\left(\frac{C}{\kappa},x\right)\ge Ca.
\]
This implies that
\[
\forall t\in(0,+\infty),\ \lim_{x\to +\infty} x^{2s}u(t,x)\ge a\kappa t,
\]
thus ending the proof.


\bibliographystyle{plain}
\bibliography{only_flat.bib}
\end{document}